\def\d{\mathrm{d}}
\def\laweq{\buildrel d \over =}
\newcommand{\VaR}{\mathrm{VaR}}
\newcommand{\WVaR}{\overline{\mathrm{VaR}}}
\newcommand{\BVaR}{\underline{\mathrm{VaR}}}
\newcommand{\U}{\mathrm{U}}
\newcommand{\X}{\mathbf{X}}
\newcommand{\E}{\mathbb{E}}
\newcommand{\R}{\mathbb{R}}
\newcommand{\G}{\mathbb{G}}
\newcommand{\N}{\mathbb{N}}
\newcommand{\M}{\mathcal{M}}
\newcommand{\p}{\mathbb{P}}
\newcommand{\id}{\mathrm{I}}
\newcommand{\lcx}{\prec_{\mathrm{cx}}}
\renewcommand{\ge}{\geqslant}
\renewcommand{\le}{\leqslant}
\renewcommand{\leq}{\leqslant}
\renewcommand{\epsilon}{\varepsilon}
\renewcommand{\sc}{ }
\newcommand{\esssup}{\mathrm{ess\mbox{-}sup}}
\theoremstyle{plain}
\newtheorem{theorem}{Theorem}
\newtheorem{proposition}[theorem]{Proposition}
\theoremstyle{definition}
\newtheorem{definition}{Definition}
\newtheorem{remark}{Remark}
\newtheorem{problem}{Open Problem}
\begin{document}

 \title{Current Open Questions in Complete Mixability}
 \author{Ruodu Wang\thanks{Department of Statistics and Actuarial Science, University of Waterloo, Waterloo, ON N2L 3G1, Canada. Email: \url{wang@uwaterloo.ca}.
}}
 \maketitle
\begin{abstract}
Complete and joint mixability has raised considerable interest in recent few years, in both the theory of distributions with given margins, and applications in discrete optimization and quantitative risk management.
We list various open questions in the theory of complete and joint mixability, which are mathematically concrete, and yet accessible to a broad range of researchers without specific background knowledge.
In addition to the discussions on open questions, some results contained in this paper are new.

\begin{bfseries}Key-words\end{bfseries}: complete mixability; joint mixability; dependence; optimization; Fr\'echet problems

\begin{bfseries}AMS 2010 subject classifications:\end{bfseries} 60C05; 60E15
\end{abstract}

\section{Complete  and joint mixability}\label{sec:intro}

\subsection{Motivation of the paper}

In this paper we summarize the current state-of-the-art of research in complete and joint mixability, and list about a dozen of open questions which the author considers challenging and with  potential impact.
 Many of the open questions emerged via private communications with other researchers\footnote{some names are listed in the Acknowledgement.}.  Most of the questions are formulated in terms of complete mixability due to its nicer mathematical properties, and this is already reflected in the title of this paper.
The paper carries dual purposes: to stimulate  research activities leading to developments in the listed challenges, and to introduce the topic of complete and joint mixability to a broader range of scholars, especially from different fields of study other than applied probability.
Both purposes serve to advance the study of this topic as well as of the expanding field of dependence modeling.

One nice feature of complete and joint mixability is that the topic requires very little background knowledge, and the mathematical questions are still concrete and challenging.
The definitions, existing results and open questions can  be easily understood by any graduate students in analysis,  combinatorics, probability or statistics. This paper hopefully enhances the accessibility of the subject to a broad range of researchers at all levels and from all fields of mathematics and statistics.

Although the main purpose of this paper is to discuss open questions, some useful results listed  in Section 2 (in particular, Theorems \ref{dual}-\ref{norm2}) are original in this paper.

It is the author's sincere hope that after one or a few decades, most of the questions listed in this paper would have been answered,   connected to other fields of mathematics and its applications, and inspired new research directions that are unseen from today.
Some of the questions listed here may be easily answered by some experts, especially those from other fields, due to the obvious limitation of the author's knowledge.
 The opinions expressed in this paper as well as any errors are
solely the responsibility of the author.

\subsection{An optimization problem}\label{sec:opt}

We start the story of mixability by a simple optimization problem.
Suppose that there are $n$ steps in the production of an equipment of a certain type.
A company employs $m$ workers  specialized  in each of its $n$ production steps, that is, $mn$ workers in total; as such the company is able to produce $m$ equipments simultaneously. We use $(i,j)$ for the $i$-th worker employed in the $j$-th production step. Suppose that the time for worker $(i,j)$ to finish her job is a positive number $a_{i,j}$. Each time, the company produces $m$ equipments  and then send them out  to a buyer. Naturally, the company is interested in minimizing the time $T$ of production of the $m$ equipments, namely,  $T=\max\{t_1,\dots,t_m\}$ where $t_i$ stands for the time taken in the production of the $i$-th equipment, $i=1,\dots,m$. What is the optimal arrangement of workers for each equipment?

Each equipment is assigned with $n$ workers with one from each step. That is, for some $m$-permutations  $\sigma_1,\dots,\sigma_n$,  workers $(\sigma_1(i),1) ,\dots,(\sigma_n(i),n)$ are assigned to the $i$-th equipment, and hence $t_i=\sum_{j=1}^n a_{\sigma_j(i),j}$. The problem is
\begin{equation}\label{eq:opt}\mbox{ to minimize~~} T=  \max\left\{\sum_{j=1}^n a_{\sigma_j(i),j}:i=1,\dots,m\right\} \mbox{~over all $\sigma_1,\dots,\sigma_n\in S_m$,}\end{equation}
 where $S_m$ is the set of $m$-permutations. This optimization target is a minimax, and is very often consistent with variance reduction problems: to minimize quantities such as the sample variance of $t_1,\dots,t_n$.

Intuitively, the optimal arrangement should be such that $t_1,\dots,t_m$ are close enough, and ideally equal.
Since $$t_1+\dots+t_m=\sum_{i=1}^m\sum_{j=1}^n a_{i,j},$$ we have that $$T^*=\min_{\sigma_1,\dots,\sigma_n\in S_m}T \ge \frac 1m\sum_{i=1}^m\sum_{j=1}^n a_{i,j},$$ and ideally, $T^*$ would be almost equal to $\frac 1m\sum_{i=1}^m\sum_{j=1}^n a_{i,j}$.
Here we have two general questions:
\begin{enumerate}[(i)]
\item What is an optimal arrangement $(\sigma_1,\dots,\sigma_n)$ for \eqref{eq:opt}? How could we calculate an optimal arrangement $(\sigma_1,\dots,\sigma_n)$?
\item Under what conditions, $T^*=\frac 1m\sum_{i=1}^m\sum_{j=1}^n a_{i,j}$? How could we calculate $T^*$?
\end{enumerate}
Both questions are related to the concept of joint mixability, the main focus of this paper.

\subsection{Definitions and terminologies}
 Throughout, $n$ and $d$ are positive integers, and we assume a  atomless space $(\Omega, \mathcal A, \p)$ of random variables taking values in a semigroup $\G$ which can be chosen as $\R^d$  in most  cases.  In the literature, complete mixability and joint mixability are defined for distributions on $\R$. Theoretically, the concepts of mixability do not require any extra mathematical (topological, algebraic, analytical) structure on the underlying set $\G$ of study, other than an addition ($+$); in view of applications, only the case $\G=\R^d$ is particularly relevant.
  In the following we use the term ``distributions" for probability measures.

\begin{definition}[Joint mixability]\label{def1}
An $n$-tuple $(F_1,\cdots,F_n)$ of  distributions on $\G$ is \emph{jointly mixable} (JM) if there exists a distribution $H$ on $\G^n$ with margins $F_1,\dots,F_n$ such that  $H$ is supported in
$\{(x_1,\dots,x_n)\in \G^n: x_1+\dots+x_n=K\}$  for some $K\in \G$.
\end{definition}

In the literature, mixability is often defined  using the language of random variables. The two definitions are equivalent. 

\begin{definition}[Definition given in \cite{WPY13}]
A random vector $(X_1,\cdots,X_n)$ satisfying \begin{equation}\label{eq:defcm}
X_1+ \dots+X_n=K \mbox{~~for some~~} K \in \G,
\end{equation}
is called a \emph{joint mix}.
An $n$-tuple $(F_1,\cdots,F_n)$ of  distributions on $\G$ is \emph{jointly mixable} (JM) if
there exists a joint mix with marginal distributions $F_1,\cdots,F_n$. When $\G=\R^d$,  $K$ in \eqref{eq:defcm} is called a \emph{joint center} of $(F_1,\cdots,F_n)$.
\end{definition}

Joint mixability is supported by many applications, including the optimization problem in Section \ref{sec:opt}; see also Section \ref{sec:app} below.
Suppose that for $j=1,\dots,n$, $F_j$ is a discrete distribution on $\R$ supported on distinct points $a_{1,j},\dots,a_{m,j}$ with point-mass $1/m$ each. Let us recall question (ii) in Section \ref{sec:opt}. If $T^*=\frac 1m\sum_{i=1}^m \sum_{j=1}^n a_{i,j}$, then for some $\sigma_1^*,\dots,\sigma_n^*\in S_m$,
$$T^*= \sum_{j=1}^n a_{\sigma^*_j(i),j}, ~~~ i=1,\dots,m. $$
Now let $U$ be a discrete uniform random variable in $\{1,\dots,m\}$, and define
$X_j=a_{\sigma^*_{j}(U),j}$, $j=1,\dots,n$. It follows that $X_j\sim F_j$, $j=1,\dots,n$, and $X_1+\dots+X_n=T^*.$ That is, $(F_1,\dots,F_n)$ is jointly mixable, and the optimal arrangement $(\sigma_1^*,\dots,\sigma_n^*)$ in question (i) corresponds to a joint mix $(X_1,\dots,X_n)$. Question (ii) in Section \ref{sec:opt} is a special question of joint mixability.


Below we give the definition of complete mixability, which is the homogeneous case of joint mixability when all marginal distributions are identical.

\begin{definition}[Complete mixability]\label{def3}
  A   distribution $F$ on $\G$ is called \emph{$n$-completely
mixable} ($n$-CM) if the $n$-tuple $(F,\dots,F)$ is jointly mixable.  When $\G=\R^d$, $\mu=K/n$  is called a \emph{center} of $F$, where $K$ is the joint center of the $n$-tuple $(F,\dots,F)$.  A joint mix with identical margins $F$   is called a \emph{complete mix}.
\end{definition}

The reason why distributions in Definitions \ref{def1}-\ref{def3} are called \emph{mixable} is that we are curious about whether one is \emph{able} to find a joint \emph{mix} with the given constraints on margins.

Although complete mixability is a special case of joint mixability, the two concepts are studied separately in the literature as they require mathematical techniques at significantly different levels; see for example the  results on monotone densities in \cite{WW11} and \cite{WW14}. In addition, $n$-complete mixability is a property of a single distribution, allowing us to study the property by letting $n$ vary.

We denote by $\M_n(\mu)$ the set of
all $n$-CM distributions on $\R$ with center $\mu$, and
  by $\mathcal J_n(K)$ the set of all $n$-tuples of JM distributions with joint center $K$, that is,
$$\mathcal J_n(K)=\{(F_1,\cdots,F_n): (F_1,\cdots,F_n) \mbox{~is JM  with joint center $K$}\}.$$
Apparently, $F\in \M_n(\mu)$ if and only if $(F,\cdots,F)\in \mathcal J_n(n\mu)$.
For $n=1$ or $n=2$, the sets $\mathcal J_n(K)$ and $\mathcal M_n(\mu)$ are fully characterized. For $n\ge 3$, a full characterization of either set is still an open question.

\begin{remark}
In Definitions \ref{def1}-\ref{def3}, both the summation $x_1+\dots+x_n$ and the constant constraint  $K$ in the support of  $H$ are for  mathematical tractability and practical relevance. Other constraints may be chosen for different purposes of applications or theoretical studies.
\end{remark}

\subsection{Related literature}

In this section we provide a non-exhaustive brief list of related literature on complete and joint mixability, especially for the reader who is new to this topic.   Except for a few early milestone studies, most papers listed here are within the recent few years.

 Probability measures with given margins have been studied since the early work by Fr\'echet \cite{F51} and Hoeffding \cite{H40}; see also the milestone papers \cite{S65, T80}.  The first study of questions specifically related to complete mixability was given in \cite{GR81} where uniform distributions were shown to be $n$-CM for $n\ge 2$. Relevant contributions from  the perspectives of mass transportation, variance reduction and stochastic orders can be found in \cite{R82, RU02, MS02, KS06}. The terms \emph{complete mixability} and \emph{joint mixability} were introduced in \cite{WW11} and \cite{WPY13} respectively, along with properties and results on the complete mixability of monotone densities. Recent advances on complete and joint mixability can be found in \cite{PWW12, PWW13, WW14}.

As opposed to the strongest positive dependence (see for instance \emph{comonotonicity} in \cite{DDGKV02}), a universal notion of the strongest negative dependence does not exist for a collection of more than two random variables, and the corresponding optimization problems are generally much more complicated than those involving the strongest positive dependence. Complete and joint mixes are sometimes argued to have the strongest negative dependence structure as they naturally solve a large class of optimization problems. Recent studies searching for a notion of extremal negative dependence can be found in \cite{DD99, WW11, CL13,  CL14, LA14}.
 A recent review  on extremal dependence concepts is given in \cite{PW14b}.

Algorithms related to   mixability have been designed for questions (i) and (ii) in Section \ref{sec:opt}. 
An early study on \emph{rearrangement methods} is found in \cite{R83}; some recent research includes \cite{PR12a, EPR13, PW14a, H14}. In particular, \cite{H14} showed that question (i) in Section \ref{sec:opt} is NP-complete even in the case when all $a_{i,j}\in \mathbb Z$. As such, an analytical characterization of joint mixability is of considerable importance.

\subsection{Applications}\label{sec:app}

The concepts of complete and joint mixability are closely related to many optimization problems with marginal constraints. We discuss a few of them in this section. For the reader who is only interested in mathematical challenges, this section may be skipped.

Let $\mathcal X$ be a convex cone of random variables (taking values in $\R$) of interest; $\mathcal X$ can be chosen as $L^1$ (the set of integrable random variables) or $L^\infty$ (the set of bounded random variables) in most applications.
For some univariate distributions $F_1,\dots,F_n$, define the \emph{aggregation set}
$$\mathcal D_n=\{X_1+\dots+X_n: X_i\in\mathcal X,~X_i\sim F_i,~ i=1,\dots,n\}\subset \mathcal X.$$
Many optimization problems, including variance reduction problems, convex functionals minimization, and maximin and minimax problems such as \eqref{eq:opt} described in Section \ref{sec:opt}, boil down to the search for the smallest element in $\mathcal D_n$ with respect to convex order.
\begin{definition}[Convex order] \label{def:cxod}
Let $X$ and $Y$ two random variables with finite mean. $X$ is smaller than $Y$ \emph{in convex order}, denoted by $X\lcx Y$, if for all convex functions $f$,
\begin{equation}\E[f(X)]\le \E[f(Y)],\label{convexorder}\end{equation}
whenever both sides of \eqref{convexorder} are well-defined.
\end{definition}

It is well-known that the convex ordering largest element in $\mathcal D_n$
is always obtained by $F_1^{-1}(U)+\cdots+F_n^{-1}(U)$ for a random variable $U \sim \U[0,1]$.
However, it remains open in general to find the smallest element  in $\mathcal D_n$ with respect to convex order for $n\ge 3$.  \cite{BJW14} gave an example where $\mathcal D_n$ does not contain a smallest element in this sense.

When $(F_1,\dots,F_n)$ is JM with joint center $K$, it is easy to see that $K\in \mathcal D_n$ and $K\lcx S$ for all $S \in \mathcal D_n$.
In \cite{WW11} and \cite{EHW14},  the smallest element with respect to convex order in $\mathcal D_n$ is characterized based on joint mixability when $F_1,\cdots,F_n$ have monotone densities even if $(F_1,\dots,F_n)$ is not jointly mixable.

The concepts of complete and joint mixability have also raised a considerable interest in quantitative risk management, as it plays an important role in the context of \emph{risk aggregation with dependence uncertainty}.
A typical question in this field concerns the calculation of
\begin{equation}\label{eq:qrm}\sup\{\rho(S):S\in \mathcal D_n\} \mbox{~~and~~} \inf\{\rho(S):S\in \mathcal D_n\}\end{equation}
for a law-determined risk measure $\rho: \mathcal X\to \R$; see the book \cite[Section 6.2]{MFE05} and the early work on the risk measure Value-at-Risk (VaR) in \cite{EP06}. Here the set $\mathcal D_n$ represents the set of possible aggregate risks under model uncertainty at the level of dependence, a common setup in risk management practice.

In the case when $\rho$ is a convex risk measure, $\rho$ typically respects convex order; see  for instance \citet[Section 4]{FS11}.  Thus \eqref{eq:qrm}  boils down to questions of convex order in $\mathcal D_n$ as discussed above. Convex risk measures include the Expected Shortfall, a popular risk measure used in banking regulation; see \cite{EPRWB14}.

Among non-convex risk measures, the Value-at-Risk (VaR) is of particular interest in portfolio management. The Value-at-Risk of a random variable $X$ at level $p\in (0,1)$ is defined as the (left-continuous) inverse distribution function
$$\VaR_{p}(X)=\inf \{x\in \R: \p(X\le x)\ge p\}.$$
Quantities of interest are
\begin{equation} \label{eq:qrm1}
\WVaR_p=\sup\{\VaR_{p}(S):S\in \mathcal D_n\},
 \mbox{~~and~~}
 \BVaR_p=\inf\{\VaR_{p}(S):S\in \mathcal D_n\},~~p\in (0,1).
\end{equation}
    For more discussions and applications of this topic, see \cite{EPR13, EPRWB14}.
 The following result on $\WVaR_p$ is given in \cite{WPY13}; the case of $\BVaR_p$ is symmetric.
\begin{enumerate}[(1)]
\item For each $p\in (0,1)$, let $\Phi(p)=\frac{1}{1-p}\sum_{i=1}^n \int_p^1 \VaR_q(X)\d q.$ It holds that
          $\WVaR_p\leq
           \Phi(p).$

\item  For each $p\in (0,1)$, the equality
$\WVaR_p=
           \Phi(p)$
            holds if
           and only  if the $n$-tuple of  the distributions of
           $F_1^{-1}(W),\cdots,F_n^{-1}(W)$ is jointly mixable, where $W\sim
           \mathrm{U}[p,1]$.
           \end{enumerate}
The above result can also be applied to find minimal or maximal probability function of random variables in $\mathcal D_n$.

Although in many cases $(F_1,\dots,F_n)$ is not jointly mixable,  solutions of \eqref{eq:qrm} and \eqref{eq:qrm1} can still be obtained based on conditional complete or joint mixability in many cases; see  \cite{WPY13, PR13, EPR13,  BJW14, EHW14} for work in this direction. Some other recent research on  \eqref{eq:qrm} and \eqref{eq:qrm1} involving mixability can be found in \cite{PR12b, CV13, PR14, PWW13, BRV13, AP14, EWW14}.
 We refer to  \cite{EPRWB14} for a recent review of this subject  in the context of banking regulation, and the book \cite{R13} contains a comprehensive treatment of many related problems.

\section{Current open questions}

In this section, we discuss some open questions in complete and joint mixability. Unless otherwise specified, we consider $\G=\R$,  and $F$ is a distribution on $\R$. We use $L^0$ for the set of all random variables in $(\Omega,\mathcal A, \p)$ taking values in $\R$, and we use $\id_{A}$ to denote the indicator function of a set $A$. It is not necessary to read the following questions in a particular order.

\subsection{Uniqueness of the center}

Suppose that $F$ is $n$-CM. It is obvious that if $F$  has finite mean $\mu$, then its center
is unique and equal to $\mu$.  It is shown that if $x\p(|X|\ge x)\rightarrow 0$ as $x\rightarrow \infty$ for $X\sim F$, then the center of $F$ is also unique; see \cite[Proposition 2.1]{WW11}. This uniqueness can be easily extended to the case of $\G=\R^d$.  For a generic Abelian group $\G$, the uniqueness is not guaranteed; an example can be easily built for finite cyclic groups. For instance, consider a Bernoulli distribution $\mathrm{Bern}(1/2)$ on $\mathbb Z_2$ with $\p(X=0)=\p(X=1)=1/2$ for $X\sim \mathrm{Bern}(1/2)$. It is obvious that $X+X=0$ and $X+(1-X)=1$ on $\mathbb Z_2$, hence the center is not unique in this setting.

We are interested in whether the center $\mu$ is always unique for the case $\G=\R$ or $\R^d$.
Non-uniqueness may only happen in the case that the support of $F$ is unbounded from both sides, and $F$ does not have finite mean.
Note that the index $n$ in complete mixability is irrelevant; indeed if a distribution $F$ is $n$-CM with center $\mu_1$ and $k$-CM with center $\mu_2$, $\mu_1\ne \mu_2$, then $F$ is also $nk$-CM with centers $\mu_1$ and $\mu_2$.  Therefore, it suffices to determine whether a distribution can be $n$-CM with different centers for any $n\in \N$.

\begin{problem}
Is the center of mixability always unique for a distribution on $\R$ (or $\R^d$)? In other words, for $\mu,\nu\in \R$, $\mu\ne \nu$, is it true that $\mathcal M_n(\mu)\cap\mathcal M_n(\nu)=\varnothing$?
\end{problem}

\subsection{Generic proofs of some theorems}\label{sec:23}

Below we list some main results on complete and joint mixability in the recent literature.
\begin{enumerate}[(a)]
\item \citep{RU02} Any continuous distribution function $F$ having a symmetric and
unimodal density is $n$-CM for $n\ge 2$.
\item \citep{WW11} Suppose that $F$ admits a monotone density on its essential support $[a,b]$ with mean $\mu$. Then $F$ is $n$-CM if and only if
\begin{equation}\label{meancd}a+\frac{b-a}n\le \mu \le b-\frac{b-a}n.\end{equation}
\item  \citep{PWW12} Suppose that $F$ admits a concave density on its essential support.
Then $F$ is $n$-CM for $n\ge 3$.
\item \citep{PWW13} Suppose that $F$ admits a density $f$ on a finite interval $[a,b]$, and $f(x)\ge \frac3{n(b-a)}$ on $[a,b]$. Then  $F$ is $n$-CM.
\item \citep{WW14} Suppose that $F_1,\dots,F_n$ all admit increasing (or decreasing) densities on their essential supports $[a_i,b_i]$ and have mean $\mu_i$, $i=1,\dots,n$, respectively. Then $(F_1,\dots,F_n)$ is JM if and only if
\begin{equation}\label{meancd3}\sum_{i=1}^n a_i+\max_{i=1,\dots,n} (b_i-a_i) \le \sum_{i=1}^n \mu_i \le\sum_{i=1}^n b_i-\max_{i=1,\dots,n} (b_i-a_i) .\end{equation}
\item \citep{WW14}
Suppose that $F_i\sim E_1(\mu_i,\sigma_i^2, \phi)$, where $E_1$ is the 1-elliptical distribution (for definition, see \cite{FKN90}) with parameters $\mu_i\in \R$, $\sigma_i\ge 0$, $i=1,\cdots,n$, and $\phi$ is a characteristic generator for an $n$-elliptical distribution.   Then $(F_1,\dots,F_n)$ is JM  if and only if
\begin{equation}\sum_{i=1}^n \sigma_i\ge 2\max_{i=1,\cdots,n} \sigma_i.\label{varcd}\end{equation}
\end{enumerate}

Note that all results (a)-(e) include uniform distributions as a special case. The proofs of (a) and (f) are analytical and reasonably straightforward due to the symmetric nature of the underlying distributions. The dependence structure of a corresponding  joint mix in (a) and (f) is clear.

However, the proofs of the recent results on complete mixability, namely (b)-(d), are all based on combinatorics and discretization of distributions. We outline the common logic of the proofs as follows. To show that a distribution $F$ is $n$-CM, first we find a sequence of discretizations of this distribution, say $F_N$, with $F_N\to F$ (sufficiently in the weak sense) as $N\rightarrow \infty$. Then, for a fixed $N$, we try to show that $F_N$ can be decomposed to a convex combination of $n$-discrete uniform distributions with the same mean, or a convex combination of known-to-be-$n$-CM discrete distributions with the same mean. This often involves mathematical induction on the number of points in the support of $F_N$. The proof of result (e) in \cite{WW14} is even more complicated; it involves decomposition of $F_1,\dots,F_n$ into combination of distributions with step density functions (which are not jointly mixable, but in some sense close to being jointly mixable), and a mathematical induction on the number of effective steps is used.
 The proofs for the above-mentioned results are typically very long and technical, and more importantly the details of the dependence structure for a joint mix are always unclear. These rather unfortunate features significantly reduce the accessibility of the theory of mixability for the general reader.

 Through private communications with many scholars interested in this topic, the author believes that   generic (probabilistic, analytic) proofs without involving combinatorics or mathematical induction is in demand for the future development of the theory.

\begin{problem}
Is there a generic (probabilistic, analytic) proof of the main results in complete and joint mixability?
\end{problem}

Some duality theorems on probability measures with given margins in the literature can be applied to complete and joint mixability.  Recent studies on complete mixability using duality methods are found in \cite{PR12b, PR13, W14}. The following theorem was essentially established in \cite{S65, R82}. How they could be used to generate new results on mixability is still unclear.
\begin{theorem}[\cite{S65, R82}] \label{dual}
For distributions $F_1,\dots,F_n$ on $\R$, the following statements  are equivalent:
\begin{enumerate}[(i)]
\item
$(F_1,\dots,F_n)$ is jointly mixable with joint center $K$.
\item  For all measurable functions $f_i:\R\to\R$, $i=1,\dots,n$,
$$\sum_{i=1}^n \int f_i \d F_i \ge \inf\left\{\sum_{i=1}^n \E[f_i(Y_i)]: Y_1,\dots,Y_n\in L^0,~\sum_{i=1}^n Y_i=K \right\},$$
whenever both sides of the above equation are finite.
\item  For all measurable functions $f_i:\R\to\R$,  $i=1,\dots,n$ such that $\sum_{i=1}^n f_i(x_i)\ge \id_{\{x_1+\dots+x_n=K\}}$ for all $(x_1,\dots,x_n)\in \R^n$,
$$\sum_{i=1}^n \int f_i \d F_i \ge 1,$$
whenever the left-hand side of the above equation is finite.
\end{enumerate}
\end{theorem}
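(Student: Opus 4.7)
The plan is to prove the circular chain (i)$\Rightarrow$(ii)$\Rightarrow$(iii)$\Rightarrow$(i). The first two implications are essentially bookkeeping, while closing the loop invokes the classical Strassen--Rüschendorf separation theorem and is where the main difficulty lies. For (i)$\Rightarrow$(ii), suppose $(F_1,\dots,F_n)$ is jointly mixable with joint center $K$ and let $(X_1,\dots,X_n)$ be a witnessing joint mix, so $X_i\sim F_i$ and $\sum_i X_i=K$. For any measurable $f_i$ with $\sum_i\int f_i\,\d F_i$ finite, the choice $Y_i:=X_i$ makes $(Y_1,\dots,Y_n)$ feasible in the infimum on the right-hand side of (ii), with $\sum_i\E[f_i(Y_i)]=\sum_i\int f_i\,\d F_i$, so that infimum is bounded above by the left-hand side.

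For (ii)$\Rightarrow$(iii), suppose $\sum_i f_i(x_i)\ge\id_{\{x_1+\dots+x_n=K\}}$ for every $x\in\R^n$. Given any $(Y_1,\dots,Y_n)\in(L^0)^n$ with $\sum_i Y_i=K$, evaluating the pointwise bound at $(Y_1(\omega),\dots,Y_n(\omega))$ yields $\sum_i f_i(Y_i)\ge 1$ almost surely, hence $\sum_i\E[f_i(Y_i)]\ge 1$ whenever defined. The infimum in (ii) is therefore at least $1$, and the hypothesis of (ii) then forces $\sum_i\int f_i\,\d F_i\ge 1$.

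The hard part is (iii)$\Rightarrow$(i), which I would handle via a Hahn--Banach separation argument. Introduce the closed hyperplane $H_K:=\{x\in\R^n:x_1+\dots+x_n=K\}$ and the vector space $V$ of bounded measurable functions on $\R^n$ of the form $\phi(x)=\sum_{i=1}^n f_i(x_i)$ with each $f_i$ bounded measurable, equipped with the linear functional $L(\phi):=\sum_i\int f_i\,\d F_i$. A quick preliminary observation is that $L$ is well-defined and positive on $V$: integrating $\phi$ against the product measure $F_1\otimes\cdots\otimes F_n$ gives $L(\phi)=\int\phi\,\d(F_1\otimes\cdots\otimes F_n)$, an expression that depends only on $\phi$ and is non-negative when $\phi\ge 0$. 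Hypothesis (iii), specialized to bounded $f_i$, then asserts $L(\phi)\ge 1$ for every $\phi\in V$ with $\phi\ge\id_{H_K}$. A Hahn--Banach/Riesz construction extends $L$ to a positive linear functional $\tilde L$ on bounded measurable functions with $\tilde L\equiv L$ on $V$ and $\tilde L(\id_{H_K})=1$; the corresponding probability measure $\nu$ on $\R^n$ has marginals $F_1,\dots,F_n$ (by applying $\tilde L$ to $\phi(x)=h(x_i)\in V$) and is concentrated on $H_K$, so it is the law of a joint mix witnessing (i).

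The two technical obstacles I foresee are: first, giving a careful topological version of the Hahn--Banach extension that accommodates the non-continuous indicator $\id_{H_K}$ while forcing the equality $\tilde L(\id_{H_K})=1$---this is precisely the role played by Rüschendorf's duality in \cite{R82} and is the real core of the argument; and second, lifting the result from bounded measurable $f_i$ (to which Hahn--Banach directly applies) to the general measurable $f_i$ appearing in (ii), which I would handle by truncating at level $M$, applying the bounded case, and passing to the limit via dominated convergence, with the ``whenever both sides are finite'' proviso being exactly what justifies the limit passage.
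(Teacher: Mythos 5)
Your chain (i)$\Rightarrow$(ii)$\Rightarrow$(iii)$\Rightarrow$(i) runs in the opposite cyclic order from the paper, which proves (i)$\Rightarrow$(iii)$\Rightarrow$(ii)$\Rightarrow$(i). The two easy legs are correct in both versions, but the division of labour is different: you make (ii)$\Rightarrow$(iii) the trivial step (evaluate the pointwise bound along any feasible $(Y_1,\dots,Y_n)$ and note the infimum is then at least $1$), whereas the paper makes (i)$\Rightarrow$(iii) trivial and proves (iii)$\Rightarrow$(ii) by a small normalization trick --- dividing nonnegative $f_i$ by the infimum $\xi$ so that $\sum_i f_i(x_i)/\xi\ge \id_{\{x_1+\dots+x_n=K\}}$, then removing the nonnegativity by shift-invariance and monotone convergence. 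Consequently the hard closing step differs: the paper cites Strassen's Theorem 7 for (ii)$\Rightarrow$(i), while you invoke the R\"uschendorf-type duality for the measure of the hyperplane $H_K$ to get (iii)$\Rightarrow$(i). Both routes are legitimate, and the paper's own remark explicitly records that (i)$\Leftrightarrow$(ii) comes from Strassen and (i)$\Leftrightarrow$(iii) from R\"uschendorf's Equation (4) (see also Ramachandran--R\"uschendorf), so your choice of which classical duality to lean on is equally acceptable. One advantage of your order is that no approximation argument is needed for the easy legs; the paper's order buys a cleaner reduction to Strassen's marginal-problem formulation.

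Two caveats on your (iii)$\Rightarrow$(i). First, the Hahn--Banach sketch is not self-contained as written: a positive extension of $L$ to all bounded measurable functions yields only a finitely additive set function, and forcing $\tilde L(\id_{H_K})=1$ is not a routine extension, since the natural sublinear majorant $p(\psi)=\inf\{L(\phi):\phi\in V,\ \phi\ge\psi\}$ gives the inequality in the wrong direction when you try to prescribe the value at $\id_{H_K}$ from above. You correctly flag that this is exactly the content of the cited duality theorem, and since the paper likewise disposes of its hard direction by citation, your proof is at a comparable level of completeness provided that citation is accepted rather than re-proved. Second, your worry about lifting from bounded to general measurable $f_i$ is moot in your chain: (iii) is only used as a hypothesis in the hard direction, and restricting it to bounded $f_i$ only weakens the hypothesis, so no truncation or dominated-convergence step is actually required.
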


\begin{proof} ~
\begin{enumerate}[(a)]
\item (i)$\Rightarrow$(iii): Let $(X_1,\dots,X_n)$ be a joint mix with joint center $K$, and $X_i\sim F_i$, $i=1,\dots,n$. Then for measurable functions $f_1,\dots,f_n$ in (iii),
$$\sum_{i=1}^n \int f_i \d F_i=\sum_{i=1}^n \E[f_i(X_i)]\ge \E[\id_{\{X_1+\dots+X_n=K\}}]=1.$$
\item (iii)$\Rightarrow$(ii): 
For measurable functions $f_i:\R\to\R_+$, $i=1,\dots,n$, let $$\xi=\inf\left\{\sum_{i=1}^n \E[f_i(Y_i)]:  Y_1,\dots,Y_n\in L^0,~\sum_{i=1}^n Y_i=K \right\}.$$
It follows that
$$\xi\le \inf\left\{\sum_{i=1}^n  f_i(y_i):  y_1,\dots,y_n\in \R,~\sum_{i=1}^n y_i=K \right\}.$$
That is,
$\sum_{i=1}^n f_i(x_i)/\xi \ge \id_{\{x_1+\dots+x_n=K\}}$ for all $(x_1,\dots,x_n)\in \R^n$. It follows from (iii) that
\begin{equation}\label{eq:S65}\sum_{i=1}^n \int f_i \d F_i\ge \xi= \inf\left\{\sum_{i=1}^n \E[f_i(Y_i)]: Y_i\in L^0,~i=1,\dots,n,~\sum_{i=1}^n Y_i=K \right\}. \end{equation}
Now we have shown that \eqref{eq:S65} holds for non-negative functions $f_1,\dots,f_n$. Note that \eqref{eq:S65} is invariant under a shift in any of $f_1,\dots,f_n$, and hence it holds also for all functions $f_1,\dots,f_n$ bounded from below. For functions that are unbounded from below, a standard approximation argument using monotone convergence theorem would show that \eqref{eq:S65} still holds.
\item (ii)$\Rightarrow$(i): this directly follows from Theorem 7 of \cite{S65}.  \qedhere
\end{enumerate}
\end{proof}

\begin{remark}
Indeed, in Theorem \ref{dual}, (i)$\Leftrightarrow$(ii) can be obtained from a particular case of \cite[Theorem 7]{S65}, and
(i)$\Leftrightarrow$(iii) can be obtained from a    particular case  of \cite[Equation (4)]{R82}; see also \cite[Theorem 1]{RR95}. None of the results in \cite{S65} and \cite{R82} are stated specifically for the case of mixability.
\end{remark}


\subsection{Representation and decomposition}

There are two decompositions of complete and joint mixability into simple objects, as shown in Theorems \ref{th:dist} and \ref{th:rep} below. Although similar ideas may be found in the literature,  the theorems themselves are new in this paper.

In the following, we say a distribution $F$ is an \emph{$n$-discrete uniform distribution} on $(a_1,\dots,a_n)\in \R^n$ if $\p(X=x)=\#\{i=1,\dots,n:a_i=x\}/n$ for $X\sim F$.
Theorem 3.2 of \cite{PWW12} says that a discrete distribution $F$ is $n$-CM with center $\mu$ if and only if it has a decomposition:
$$F=\sum_{i=1}^\infty b_i F_i,$$
where $\sum_{i=1}^\infty b_i=1,$ $b_i\ge 0$, $i\in \N$ and $F_i,~i\in \N$ are $n$-discrete uniform distributions with mean $\mu$.
A stronger result can be obtained for any CM distributions.

\begin{theorem}\label{th:dist}
A distribution $F$ on $\R$ is $n$-CM with center $\mu$ if and only if it has the following representation
\begin{equation} \label{eq:rep}F=\int_{\R^n} F_\mathbf{a}\d h(\mathbf{a}),\end{equation}
where $F_\mathbf{a},~\mathbf{a} \in \R^n$ are $n$-discrete uniform distributions with mean $\mu$,  $h$ is a probability measure on $\R^n$, and for a fixed $x\in \R$, $F_\mathbf{a}(x)$ is  measurable in $\mathbf{a}\in \R^n$.
\end{theorem}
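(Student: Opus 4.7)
The plan is to give short, constructive proofs of both directions via a conditional-mixing argument: the representation \eqref{eq:rep} expresses $F$ as a mixture of $n$-discrete uniforms, each of which is manifestly $n$-CM with center $\mu$, so the complete mix of $F$ is obtained by first sampling the atoms from $h$ and then cyclically shuffling them. Conversely, any complete mix of $F$ furnishes such a representation by taking $h$ to be the joint law of the mix and reading the marginal identity.

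For the ``if'' direction, fix $\mathbf{a}=(a_1,\dots,a_n)$ in the support of $h$, so $a_1+\dots+a_n=n\mu$. The $n$-discrete uniform $F_\mathbf{a}$ admits an explicit complete mix via cyclic shifts: with $V$ discrete uniform on $\{0,1,\dots,n-1\}$, set $X_j^{\mathbf{a}}=a_{1+((V+j-1)\bmod n)}$; then each $X_j^{\mathbf{a}}\sim F_\mathbf{a}$ and $\sum_j X_j^{\mathbf{a}}=n\mu$ surely. Now draw $\mathbf{A}$ from $h$ independently of $V$ and set $X_j=A_{1+((V+j-1)\bmod n)}$. Conditioning on $\mathbf{A}$ and using \eqref{eq:rep}, each $X_j\sim F$, while $\sum_j X_j=n\mu$ surely, so $(X_1,\dots,X_n)$ is a complete mix of $F$ with center $\mu$.

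For the ``only if'' direction, let $(X_1,\dots,X_n)$ be a complete mix of $F$ with $X_1+\dots+X_n=n\mu$, let $h$ denote the law of $(X_1,\dots,X_n)$ on $\R^n$, and for $\mathbf{a}\in\R^n$ take $F_\mathbf{a}$ to be the $n$-discrete uniform on $\mathbf{a}$, so
$$F_\mathbf{a}(x)=\frac{1}{n}\sum_{i=1}^n \one_{\{a_i\le x\}},$$
which is clearly Borel-measurable in $\mathbf{a}$. Since $X_i\sim F$ for each $i$, for every Borel $B\subset\R$,
$$\int_{\R^n} F_\mathbf{a}(B)\,\d h(\mathbf{a})=\E\left[\frac{1}{n}\sum_{i=1}^n \one_{\{X_i\in B\}}\right]=F(B),$$
which is \eqref{eq:rep}. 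Because $h$ is concentrated on $\{\mathbf{a}:a_1+\dots+a_n=n\mu\}$, $F_\mathbf{a}$ has mean $\mu$ for $h$-a.e.\ $\mathbf{a}$; on the exceptional null set I would redefine $F_\mathbf{a}$ to be the $n$-discrete uniform on $(\mu,\dots,\mu)$, which does not affect the integral and puts the representation literally in the stated form.

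I anticipate no serious obstacle: the theorem is essentially the observation that a mixture-of-$n$-discrete-uniforms representation at the level of distributions and the existence of a complete mix at the level of random variables are two descriptions of the same object, with the probability measure $h$ encoding the joint law of the complete mix. The only mild technicalities are the Borel-measurability of $\mathbf{a}\mapsto F_\mathbf{a}(x)$, which is immediate from the indicator formula, and the bookkeeping for the $h$-null set where the mean condition could formally fail.
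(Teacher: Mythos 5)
Your proof is correct and follows essentially the same route as the paper's: for the ``if'' direction you mix together complete mixes of the $F_\mathbf{a}$ using an independent draw $\mathbf{A}\sim h$ (the paper leaves the complete mix of $F_\mathbf{a}$ implicit where you make it explicit via cyclic shifts), and for the ``only if'' direction you take $h$ to be the law of a complete mix of $F$ and identify $F_\mathbf{a}$ with the empirical distribution of its components, exactly as the paper does via its auxiliary variable $Z=\sum_i X_i\id_{\{U=i\}}$. No gaps; your handling of the measurability and the $h$-null set is a harmless refinement.
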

\begin{proof}
Let $\mathbf X_\mathbf{a}$ be a $\mu$-centered complete mix with identical marginal distributions $F_\mathbf{a}$, $\mathbf{a} \in \R^n$. Take a random vector $\mathbf A\sim h$ be independent of $\mathbf X_\mathbf{a}$, $\mathbf{a} \in \R^n$ and define $$\mathbf X_{\mathbf  A}(\omega)=\mathbf X_{\mathbf A(\omega)}(\omega),~ \omega \in \Omega.$$ It is easy to see that
$\X_{\mathbf A}$  is also a $\mu$-centered complete mix. The marginal distribution of $\mathbf X_{\mathbf A}$ can be easily calculated as $$\p\left(\X_{\mathbf A}\le (x,\infty,\dots,\infty)\right)=\int_{\R^n} \p\left(\X_\mathbf{a}\le  (x,\infty,\dots,\infty)\right) \d h(\mathbf{a})=\int_{\R^n} F_\mathbf{a}\d h(\mathbf{a})=F(x),~~x\in \R.$$
Hence, $F$ is $n$-CM with center $\mu$.

Now suppose that $F$ is $n$-CM with center $\mu$. Let $\X=(X_1,\dots,X_n)$ be a $\mu$-centered  complete mix with identical marginal distributions $F$.
Let $F_{\mathbf a}$ for $\mathbf a  \in \R^n,~ \mathbf a \cdot \mathbf 1_n=n\mu$ be the $n$-discrete uniform distribution on $\mathbf a$ with mean $\mu$. It is obvious that for fixed $x\in \R$, $F_\mathbf{a}(x)$ is  measurable in $\mathbf{a}\in \R^n$.
Let $U$ be a discrete uniform distribution on $(1,\dots,n)$, independent of $\X$, and $Z=\sum_{i=1}^n X_i\id_{\{U=i\}}$. It is straightforward to verify that that $Z\sim F$, and $$\p(Z\le x|\X)=\frac 1n\sum_{i=1}^n \p(X_i\le x|\X)=\frac{1}{n}\sum_{i=1}^n\E[\id_{\{X_i\le x\}}|\X]=\frac{1}{n}\sum_{i=1}^n\mathcal \id_{\{X_i\le x\}}=F_\mathbf a(x)\big{|}_{\mathbf a =\X}, ~~x\in \R.$$
It follows that
$$F(x)=\E[\p(Z\le x|X)]=  \int_{\R^n} F_\mathbf a(x) \d \p(\X\le \mathbf a),~~x\in\R,
$$
and $h(\mathbf a)$ in \eqref{eq:rep} can be chosen as $\p(\X\le \mathbf a)$.
\end{proof}


Since complete (and joint) mixability is preserved by taking weak limit (see \cite{WW11}), it is often sufficient to investigate complete mixability for bounded discrete distributions on $\mathbb Z$ and then take a limit for general distributions; this technique was used repeatedly in \cite{WW11,PWW12,PWW13}.
We say a vector $\X$ is a \emph{binary multinomial random vector} if  $\X$ has a multinomial distribution with the ``number of trials" parameter $n=1$, that is, $\X$ takes values in $\{0,1\}^n$ and exactly one of the components of $\X$ is 1. The following decomposition, which could be seen as ``perpendicular" to Theorem \ref{th:dist}, may  be of help to characterize complete and joint mixability.

\begin{theorem}\label{th:rep}
Suppose that $\X$ takes values in $\mathbb Z_+$.
$\X$ is a joint mix with joint center $N\in \mathbb Z_+$ if and only if it has the following representation
\begin{equation} \label{eq:rep}\X=\sum_{k=1}^{N} \X_k,   \end{equation}
where $\X_k$, $k=1,\dots,N$ are binary multinomial  random vectors.
\end{theorem}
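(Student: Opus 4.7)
The plan is a direct, pathwise construction based on the natural ``balls in boxes'' picture: think of $\X=(X_1,\dots,X_n)$ as distributing $N$ balls into $n$ boxes with $X_i$ balls in box $i$. Each ball can then be encoded as a binary multinomial vector indicating which box it landed in.

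The ``if'' direction is essentially immediate. If $\X=\sum_{k=1}^N \X_k$ with each $\X_k$ binary multinomial, then summing components gives $\X\cdot\mathbf{1}_n=\sum_{k=1}^N \X_k\cdot\mathbf{1}_n=N$ almost surely, so $\X$ is a joint mix with joint center $N$.

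For the ``only if'' direction, suppose $\X$ takes values in $\mathbb Z_+^n$ with $X_1+\dots+X_n=N$ a.s. For each $k\in\{1,\dots,N\}$, I would define pathwise
$$J_k(\omega)=\min\Big\{j\in\{1,\dots,n\}:\sum_{i=1}^j X_i(\omega)\ge k\Big\},\qquad \X_k(\omega)=e_{J_k(\omega)},$$
where $e_j$ is the $j$-th standard basis vector in $\R^n$. Since $J_k$ is determined by finitely many cumulative sums of the measurable random variables $X_i$, it is a measurable function of $\X$, hence $\X_k$ is a well-defined random vector. By construction $\X_k$ always takes one of the values $e_1,\dots,e_n$, so it is binary multinomial.

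It then remains to verify the identity $\sum_{k=1}^N \X_k=\X$. For each $\omega$, the indices $k$ with $J_k(\omega)=j$ are exactly $k\in\{\sum_{i<j}X_i(\omega)+1,\dots,\sum_{i\le j}X_i(\omega)\}$, a set of cardinality $X_j(\omega)$; hence
$$\sum_{k=1}^N \X_k(\omega)=\sum_{j=1}^n X_j(\omega)\,e_j=\X(\omega).$$
There is no substantive obstacle: the theorem is a stochastic lift of the trivial bijection between weak compositions of $N$ into $n$ parts and words of length $N$ over $\{1,\dots,n\}$, and the only thing one has to check carefully is the measurability of the $\X_k$, which is automatic from the cumulative-sum definition of $J_k$.
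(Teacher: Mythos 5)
Your proof is correct and is essentially the same as the paper's: your $\X_k=e_{J_k}$ with $J_k=\min\{j:\sum_{i\le j}X_i\ge k\}$ coincides coordinatewise with the paper's $Y_{k,i}=\id_{\{\sum_{j\le i} X_j\ge k\}}-\id_{\{\sum_{j\le i-1} X_j\ge k\}}$ (using nonnegativity of the $X_i$ so the cumulative sums are nondecreasing), and your counting argument for $\sum_k\X_k=\X$ is just the unrolled form of the paper's telescoping sum. The ``if'' direction is identical.
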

\begin{proof}
Suppose that \eqref{eq:rep} holds. Since $\X_k \cdot \mathbf 1_n=1$, it is easy to see that $\X$ is a joint mix with center $N$.
Now suppose that $\X=(X_1,\dots,X_n)$ is a joint mix. For $k=1,\dots, N$ and $i=1,\dots,n$ let
$$Y_{k,i}=\id_{\{\sum_{j=1}^i X_j\ge k\}}-\id_{\{\sum_{j=1}^{i-1} X_j\ge k\}}$$
with the convention that $\sum_{j=1}^{0} X_j=0$,
and let $\X_k=(Y_{k,1},\dots,Y_{k,n})$.
Since $X_1+\dots+X_n=N$, each $\X_k$ is binary multinomial.
Then
\begin{align*}
\sum_{k=1}^N \X_k=&\left(\sum_{k=1}^N Y_{k,1},\dots,\sum_{k=1}^N Y_{k,n}\right)\\
=&\left(\sum_{k=1}^N \id_{\{X_1\ge k\}},\sum_{k=1}^N \id_{\{X_1+X_2\ge k\}}-\sum_{k=1}^N\id_{\{X_1\ge k\}},\dots,\sum_{k=1}^N \id_{\{\sum_{j=1}^n X_j\ge k\}}-\sum_{k=1}^N\id_{\{\sum_{j=1}^{n-1} X_j\ge k\}}\right)\\
=&\left(X_1,X_1+X_2-X_1,\dots, \sum_{j=1}^n X_j-\sum_{j=1}^{n-1} X_j\right)=(X_1,X_2\dots,X_n).
\end{align*}
Thus, $\X$ admits a decomposition of \eqref{eq:rep}.
\end{proof}

As a trivial consequence of Theorem \ref{th:rep}, any binomial distribution with parameters $(n,1/n)$ for $n\in \N$ is $n$-CM, since it is the marginal distribution of   multinomial distribution with parameters $(n;1/n,\dots,1/n)$, and any multinomial random vector has a natural representation \eqref{eq:rep}.

Arguments of the type of Theorem \ref{th:dist} has been applied extensively in the recent literature to show the complete/joint mixability of some classes of distributions. It remains a question whether Theorem \ref{th:rep} can be useful in a non-trivial way.
\begin{problem}
Is Theorem \ref{th:rep} helpful (and how) in characterizing more classes of CM and JM distributions?
\end{problem}

\subsection{Norm condition}

Below we discuss the relationship between mixability and law-determined norms.
First we give the definition of a law-determined norm.

\begin{definition}[Law-determined norm]\label{def:norm}
A law-determined  norm $||\cdot||$ on $L^0$ maps $L^0$ to $[0,\infty]$, such that
\begin{enumerate}[(i)]
\item $||aX||=|a|\cdot||X||$ for $a\in \R$ and $X\in L^0$;
\item $||X+Y||\le ||X||+||Y||$ for $X,Y \in L^0$;
\item $||X||=0$ implies $X=0$ a.s.;
\item $||X||=||Y||$ if $X\laweq Y$, $X,Y \in L^0$;
\item $||X||\le ||Y||$ if $0\le X\le Y $ a.s.
\end{enumerate}
\end{definition}
The $L^p$-norms $p\in [1,\infty)$, $||\cdot||_p: L^0 \rightarrow [0,\infty],$  $X\mapsto (\E[|X|^p])^{1/p}$  and the $L^\infty$-norm $||\cdot||_\infty: L^0 \rightarrow [0,\infty],$  $X\mapsto \esssup(|X|)$  are law-determined norms. Here, we allow $||\cdot||$ to take a value of $\infty$, which means that the non-negative functional $||\cdot||$ is not necessarily a norm in the common sense; we slightly abuse the terminology here since all natural examples are norms in  respective proper spaces.
We obtain a necessary condition for complete and joint mixability based on law-determined norms. In what follows,  $(\cdot)_+=\max\{\cdot, 0\}$ and $(\cdot)_-= - \min\{\cdot, 0\}$.

\begin{theorem}\label{norm}
Suppose that $(F_1,\dots,F_n)$ is JM with joint center $K$,  $X_i\sim F_i$, $i=1,\dots,n$, $||\cdot||$ is any law-determined norm.  Then we have that \begin{equation} ||(X_i-\mu_i)_+||\le  \sum_{j=1,j\ne i}^n||(X_j-\mu_j)_-|| ~~~\mbox{and}~~~||(X_i-\mu_i)_-||\le  \sum_{j=1,j\ne i}^n ||(X_j-\mu_j)_+||,\label{normineq}\end{equation}
for all $i=1,\dots,n$, and all $\mu_1,\dots,\mu_n\in \R$ such that $\mu_1+\dots+\mu_n=K$.
\end{theorem}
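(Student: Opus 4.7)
The plan is to reduce everything to an elementary pointwise identity on the joint mix, and then push it through the norm using only properties (ii), (iv) and (v) of Definition \ref{def:norm}.

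First, because $||\cdot||$ is law-determined, property (iv) implies that the quantities on both sides of \eqref{normineq} depend on $F_1,\dots,F_n$ only through the respective marginals. Hence I may replace the given random variables by any convenient realization of $X_i\sim F_i$. Since $(F_1,\dots,F_n)$ is JM with joint center $K$, I take $(X_1,\dots,X_n)$ to be an actual joint mix, i.e.\ $X_1+\dots+X_n=K$ almost surely. Setting $Z_i=X_i-\mu_i$ and using $\mu_1+\dots+\mu_n=K$, I get the key identity
\begin{equation*}
Z_1+\dots+Z_n=0\quad \text{a.s.,}\qquad\text{so that}\qquad Z_i=-\sum_{j\ne i}Z_j\quad\text{a.s.}
\end{equation*}

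Next comes the pointwise inequality. For any real $a$ one has $a_+=(-a)_-$, and for any reals $a_1,\dots,a_k$ one has the elementary bound $\bigl(\sum_j a_j\bigr)_-\le \sum_j (a_j)_-$ (proved instantly from $x_-=\max(-x,0)$ and subadditivity of $\max(\cdot,0)$ applied to $-\sum_j a_j=\sum_j(-a_j)$). Combining these two and substituting $a_j=Z_j$,
\begin{equation*}
(Z_i)_+ \;=\; \Bigl(-\sum_{j\ne i} Z_j\Bigr)_+ \;=\; \Bigl(\sum_{j\ne i} Z_j\Bigr)_- \;\le\; \sum_{j\ne i}(Z_j)_-\quad\text{a.s.}
\end{equation*}
Since both sides are nonnegative, monotonicity (v) yields $\bigl\|(Z_i)_+\bigr\|\le \bigl\|\sum_{j\ne i}(Z_j)_-\bigr\|$, and then the triangle inequality (ii) gives $\bigl\|(Z_i)_+\bigr\|\le \sum_{j\ne i}\bigl\|(Z_j)_-\bigr\|$, which is the first inequality in \eqref{normineq}.

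The second inequality in \eqref{normineq} is obtained by applying exactly the same argument to the joint mix $(-X_1,\dots,-X_n)$ with centers $(-\mu_1,\dots,-\mu_n)$ (whose joint center is $-K$), using that $(\cdot)_+$ and $(\cdot)_-$ swap under negation. There is no real obstacle: the only point that requires care is the initial reduction to a genuine joint mix, which is what the hypothesis of joint mixability and property (iv) are tailored for; everything after that is a one-line pointwise estimate combined with properties (ii) and (v).
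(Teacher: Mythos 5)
Your proof is correct and follows essentially the same route as the paper: realize the marginals as an actual joint mix (the paper does this implicitly, you justify it explicitly via law-determinedness), apply the pointwise bound $\bigl(\sum_{j\ne i}(X_j-\mu_j)\bigr)_-\le\sum_{j\ne i}(X_j-\mu_j)_-$, and push it through the norm using monotonicity and the triangle inequality, with the second inequality by symmetry. No gaps.
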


\begin{proof}
Since $(F_1,\dots,F_n)$ is JM, there exist random variables $X_1\sim F_1,\dots,X_n\sim F_n$ such that
$X_1+\dots+X_n=K$. It follows that
$X_1-\mu_1=-((X_2+\dots+X_n)-(\mu_2+\dots+\mu_n))$ and hence
 \begin{equation} (X_1-\mu_1)_+=\left(\sum_{i=2}^n (X_i-\mu_i)\right)_-\le \sum_{i=2}^n (X_i-\mu_i)_-.\label{normineq2}\end{equation}
Applying $||\cdot||$ on both sides of \eqref{normineq2}, we obtain
$$||(X_1-\mu_1)_+|| = \left|\left|\left(\sum_{i=2}^n (X_i-\mu_i)\right)_-\right|\right|  \le \left|\left|\sum_{i=2}^n (X_i-\mu_i)_-\right|\right| \le \sum_{i=2}^n \left|\left|(X_i-\mu_i)_-\right|\right|  . $$
The rest parts are obtained by symmetry.
\end{proof}

A  similar version of Theorem \ref{norm} for complete mixability is listed below.

\begin{theorem}\label{norm2}
Suppose that $F$ is $n$-CM with center $\mu$, $X\sim F$ and $||\cdot||$ is any law-determined norm.  Then we have that \begin{equation} ||(X-t)_+||\le (n-1) ||(X-s)_-|| ~~~\mbox{and}~~~||(X-t)_-||\le (n-1)  ||(X-s)_+||,\label{normineq'}\end{equation}
for all $t\in \R$ and $s=(n\mu-t)/(n-1)$.
\end{theorem}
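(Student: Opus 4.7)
The plan is to derive Theorem \ref{norm2} directly from Theorem \ref{norm} by exploiting the fact that complete mixability is the homogeneous case of joint mixability. Since $F$ is $n$-CM with center $\mu$, the $n$-tuple $(F,\dots,F)$ is JM with joint center $K = n\mu$, so I may pick random variables $X_1,\dots,X_n$ all distributed as $F$ with $X_1+\dots+X_n=n\mu$ and invoke Theorem \ref{norm}.

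The key trick is to choose the centering parameters $\mu_1,\dots,\mu_n$ in Theorem \ref{norm} \emph{asymmetrically}, even though the marginals are identical: take $\mu_1 = t$ and $\mu_2=\dots=\mu_n = s$. With $s=(n\mu-t)/(n-1)$, the constraint $\mu_1+\dots+\mu_n = n\mu$ is satisfied, since $t + (n-1)s = t + (n\mu - t) = n\mu$. Applying the first inequality in \eqref{normineq} of Theorem \ref{norm} with $i=1$ then yields
\begin{equation*}
\|(X_1 - t)_+\| \le \sum_{j=2}^n \|(X_j - s)_-\|.
\end{equation*}
Because $\|\cdot\|$ is law-determined and $X_j \laweq X$ for each $j$, every summand on the right equals $\|(X-s)_-\|$, so the right-hand side collapses to $(n-1)\|(X-s)_-\|$, giving the first inequality of \eqref{normineq'}. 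The symmetric second inequality in \eqref{normineq'} follows in exactly the same way from the second inequality in \eqref{normineq}.

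There is essentially no obstacle here; the only thing to observe is that the hypothesis of Theorem \ref{norm} allows the $\mu_i$ to be chosen freely subject to $\mu_1+\dots+\mu_n=K$, and this freedom, combined with the law-invariance property (iv) in Definition \ref{def:norm}, is precisely what converts a statement involving $n$ different offset parameters into one involving only the pair $(t,s)$. The proof should be only a couple of lines.
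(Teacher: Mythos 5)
Your proof is correct and is exactly the argument the paper intends: Theorem \ref{norm2} is stated without proof as ``a similar version of Theorem \ref{norm}'', and your specialization to identical margins with the asymmetric choice $\mu_1=t$, $\mu_2=\cdots=\mu_n=s$ (which satisfies $t+(n-1)s=n\mu$), combined with law-invariance of the norm to collapse the sum to $(n-1)\|(X-s)_-\|$, is the intended derivation.
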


It is worth noting that if we take $||\cdot||=||\cdot||_\infty$ and $s=t=\mu$ in Theorem \ref{norm2}, then we obtain that
$$||(X-\mu)_+||_\infty\le (n-1) ||(X-\mu)_-||_\infty,$$
which is
$b-\mu \le (n-1)(\mu-a)$, where   $a=\sup\{t\in \R: F(t)=0\}$ and $b=\inf\{t\in \R: F(t)=1\}$.
Combining with the other inequality in \eqref{normineq'}, we obtain \begin{equation}\label{meancd1}a+\frac{b-a}n\le \mu \le b-\frac{b-a}n.\end{equation}
\eqref{meancd1} is the \emph{mean condition} obtained in \cite{WW11}, one of the key necessary conditions for complete mixability, and is also a sufficient condition if $F$ has monotone density, shown in \cite{WW11}; see also (b) in Section \ref{sec:23}.
If we take $||\cdot||=||\cdot||_2$ and $\mu_i=\E[X_i]$ in Theorem \ref{norm}, and assume  that each  $F_i$ is $\mathrm{N}(\mu_i,\sigma_i^2)$, $i=1,\dots,n$, then we obtain that
$$2\max_{i=1,\dots,n}\sigma_i\le \sum_{i=1}^n \sigma_i.$$
which is a sufficient condition for $F_1,\dots,F_n$ to be JM, shown in \cite{WPY13}; see also (f) in Section \ref{sec:23}.
Both examples indicate that special cases of \eqref{normineq} and \eqref{normineq'} may be sufficient for particular classes of distributions.


\begin{problem}
Suppose that $F$ has mean $\mu$, and \eqref{normineq'} holds for all law-determined norms $||\cdot||$  and all $t\in \R$. (With what extra conditions, possibly some smoothness conditions) is it sufficient for  $F$ to be $n$-CM?
\end{problem}
This problem induces another interesting question which is not directly related to mixability: how can we characterize all possible law-determined norms in Definition \ref{def:norm}?

\subsection{Mixability in vector spaces}

Most of the literature has a focus on complete and joint mixability on $\R$ for its relevance in applications. Clearly, concepts of mixability can be naturally generalized to distributions on $\R^d$. However, existing non-trivial results in the multi-dimensional setting are very limited; an early study in this direction can be found in \cite{RU02} .

A simple observation is listed below. Its proof is straightforward and omitted.
\begin{proposition}
Suppose that $F$ on $\R^d$ is $n$-CM. Then the projection of $F$ to any subspace of $\R^d$ is $n$-CM.
\end{proposition}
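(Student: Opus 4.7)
My plan is to reduce the statement to a one-line application of linearity, working through the random-variable formulation of complete mixability rather than the measure-theoretic one. Since $F$ on $\R^d$ is $n$-CM, there exist identically distributed random vectors $X_1,\dots,X_n \sim F$ with $X_1+\dots+X_n = K$ for some constant $K \in \R^d$. For any linear subspace $V \subseteq \R^d$, I would fix the orthogonal projection $\pi:\R^d \to V$ (or, more generally, any linear map encoding the projection onto $V$), and consider the random vectors $Y_i = \pi(X_i)$, $i=1,\dots,n$.

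The key steps are then, in order: (i) observe that $\pi$ is measurable, so each $Y_i$ is a genuine random vector in $V$; (ii) note that since $\pi$ depends only on the law of its argument through pushforward, the $Y_i$ are identically distributed with common law $\pi_\ast F$, which is exactly the projection of $F$ onto $V$ in the sense of the proposition; (iii) apply linearity of $\pi$ to get
\[
Y_1+\dots+Y_n \;=\; \pi(X_1)+\dots+\pi(X_n) \;=\; \pi(X_1+\dots+X_n) \;=\; \pi(K),
\]
which is a deterministic element of $V$. Thus $(Y_1,\dots,Y_n)$ is a joint mix with identical margins $\pi_\ast F$, so $\pi_\ast F$ is $n$-CM on $V$, with center $\pi(K)/n$.

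There is essentially no obstacle here: the proof is one application of linearity combined with the fact that identical distributions are preserved by pushforward through a common measurable map. The only small care needed is to interpret the phrase ``projection of $F$ to a subspace $V$'' unambiguously as the pushforward $\pi_\ast F$, which is why I would state the proof in that language. If one wished to phrase the proposition for arbitrary affine maps (not only linear projections onto subspaces), the same argument works verbatim, since an affine map $\pi(x)=Ax+b$ still satisfies $\sum_i \pi(X_i) = A K + nb$, a constant.
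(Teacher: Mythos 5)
Your proof is correct and is exactly the straightforward argument the paper has in mind (the paper explicitly omits the proof as routine): project a complete mix through the linear map $\pi$ and use linearity to see that the sum remains constant. Nothing further is needed.
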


We have the following conjecture, with its rationale explained below.
\begin{problem}
Is a uniform distribution on a convex set $C\subset \R^{d}$ necessarily $n$-CM for all $n\ge d+1$?
\end{problem}
Obviously, the above conjecture is equivalent to say that a uniform distribution on a convex set $C\subset \R^{d}$ is $(d+1)$-CM.
The trivial cases $d=0$ and $d=1$  are explained below.  When $d=0$, $C$ degenerates to a singleton, on which a distribution is always $n$-CM for $n\ge 1$. When $d=1$, $C$ is an interval, and a uniform distribution on an interval is $n$-CM for $n\ge 2$; this was already shown in \cite{GR81}. When $d=2$, any projection of a uniform distribution on $C$ to a line has a concave density. \cite{PWW12} showed that a distribution with a concave density is $n$-CM for $n \ge 3$. Of course, this is not sufficient for such a distribution to be $n$-CM on $\R^d$. However, we wonder how this type of dimension reduction would help to characterize complete mixability.
\begin{problem}
Suppose that $F$ is a distribution on $\R^d$, $d>1$, and the projection of $F$ to any essential subspace of $\R^d$ is $n$-CM.  (With what extra conditions) is it sufficient  for $F$ to be $n$-CM on $\R^d$?
\end{problem}

\subsection{Asymptotic mixability}

Let $F$ be an arbitrary distribution with bounded support. It has been observed that \citep[e.g.][]{BRV13, W14} when $n$ is large, it is more likely that $F$ becomes $n$-CM. \cite{PWW13} showed that a distribution on a bounded interval $[a,b]$ with a positive density function $f(x)\ge 3/(n(b-a))$ is $n$-CM. As a consequence, any continuous distribution with a density bounded away from zero is $n$-CM for $n$ sufficiently large. It is left open to answer whether this condition of a density bounded away from zero can be removed.
\begin{problem}
Are all absolutely continuous distributions on a bounded interval $n$-CM for large enough $n$?
\end{problem}

\subsection{Copula of a complete mix}

The major results in \cite{WW11, PWW12, PWW13, WW14} are based on combinatorics and mathematical induction. The dependence structure hidden in the proofs are unclear. It was noted in \cite{WW11} that an explicit form of a copula (which is generally not unique) of a complete  or joint mix is very difficult to write down. Since complete and joint mixability naturally give bounds to many optimization problems, it would be nice to have a copula of a complete mix, or a sampling method for simulation. The  two questions are of course  very much related.

\begin{problem}
Suppose that $F$ satisfies one of the sufficient conditions (for instance, (b) in Section \ref{sec:23}) and hence is $n$-CM.
What is a possible copula of a $n$-complete mix  with margins $F$ (or a joint mix with given margins)?
\end{problem}

\begin{problem}
Suppose that $F$ satisfies one of the sufficient conditions (for instance, (b) in Section \ref{sec:23}) and hence is $n$-CM. Could we simulate sample from a $n$-complete mix  with margins  $F$?
\end{problem}

\subsection{Characterizing more classes of CM/JM distributions}

It is a general task to characterize more classes of CM/JM distributions with their corresponding necessary and sufficient conditions. One particular question often discussed  concerns the unimodal densities, as it is relevant to many optimization problems outlined in \cite{BJW14}. \cite{PW14a} gave counter-examples where the mean condition \eqref{meancd1} is not sufficient for the complete mixability of a distribution  with a unimodal density  on a bounded interval.
\begin{problem}
Under what extra conditions a unimodal distribution on a bounded interval is $n$-CM?
\end{problem}
This question is particularly relevant to optimization problems when one of the inequalities in the mean condition \eqref{meancd1} is attained by an equality, as noted in \cite{BJW14}. That is, the mean of the distribution exactly divides the support $[a,b]$ of the distribution into two parts with lengths $(b-a)/n$ and $(b-a)(n-1)/n$, respectively.

\subsection{Convex order problems}

When $(F_1,\dots,F_n)$ is not JM, it is generally not clear whether there exists an element $S_0\in \mathcal D_n$ such that $S_0\lcx S$ for all $S \in \mathcal D_n$, where $\lcx$ stands for convex order in Definition \ref{def:cxod}. A counter-example is given in \cite{BJW14} showing an aggregation set $\mathcal D_n$ does not necessarily contain a smallest element with respect to convex order. However, for all commonly used distributions $F_1,\dots,F_n$, $\mathcal D_n$ seems to contain such a smallest element, as shown either theoretically or numerically. For instance, if each $F_i$ has a decreasing density, $i=1,\dots,n$, then  a smallest element with respect to convex order in $\mathcal D_n$ can be obtained; this was shown in \cite{EHW14}. It remains unclear under what conditions such a smallest element exists.

\begin{problem}
What are necessary and sufficient conditions for $\mathcal D_n$ to contain a  smallest element with respect to convex order?
\end{problem}

\subsection{Characterizing the aggregation set}

The last question is a  general question concerning Fr\'echet classes.
We use the aggregation set $\mathcal D_n$ as in the previous problem, and define
 $\mathcal D_n^*=\{S/n: S\in \mathcal D_n\}.$
It is obvious that the joint mixability of $F_1,\dots,F_n$ is equivalent to the inclusion of a degenerate random variable in $\mathcal D_n$.
In the case when $F=F_1=F_2=\cdots$ and $F$ has finite mean,
 \cite{MW14} showed that $ \mathcal D_n^*$ has an upper limit of $\mathcal C_F=\{S:S \lcx X,~X\sim F\}$  as $n\to \infty$. However, it is also noted that for a finite $n$, $ \mathcal D_n^*\subset \mathcal C_F$ but is generally not equal to $\mathcal C_F$. The only fully-characterized classes of $\mathcal D_n$ are when $n=2$ and the marginal distributions are Bernoulli; see \cite{MW14}.

\begin{problem}
How can one characterize $\mathcal D_n$ (maybe for some simple marginal distributions)? That is, for a given distribution $G$, determine whether $S\in \mathcal D_n$ for some $S\sim G.$
\end{problem}
 This question summarizes all challenges in complete and joint mixability. It is generally open for all $n\ge 2$.

\subsection{Some other open questions}

We conclude this paper by some other questions that are beyond the expertise of the author. To avoid misleading the reader with the author's naivety and ignorance,  we simply list some possible directions.
\begin{enumerate}
\item Algorithms to determine whether some distributions are jointly mixable, or solving question \eqref{eq:opt} in Section \ref{sec:intro}: see for instance \cite{PR12a, EPR13, PW14a, H14}. The conditions under which the swapping algorithms in \cite{PR12a} converges are still unclear.  Interestingly, \cite{H14} showed that the determination of the joint mixability of different discrete uniform distributions on $\mathbb Z$ is NP-complete.
\item Mixability under higher-dimensional constraints: for fixed bivariate or higher-dimensional marginal distributions,  determine whether a joint mix exists and develop algorithms for numerical determination.  Note that even to justify the existence of a joint distribution with given multivariate margins is not easy; see for instance \cite{S65, J97, EP09}.
\item Other multivariate functions replacing the summation of random variables in the definition of mixability; see \cite{BP14}.
\item The influence of the algebraic structure of a semigroup $\mathbb G$ on complete and joint mixability defined on $\mathbb G$.
\end{enumerate}

\subsection*{Acknowledgement}
Many of the questions outlined in this paper emerged during private communications with Paul Embrechts, Taizhong Hu, Tiantian Mao, Giovanni Puccetti, Ludger R\"uschendorf,  Bin Wang and Jingping Yang. The author is grateful to them and many other researchers for their contributions in theories and applications related to this topic. The author would also like to thank the Editor and an anonymous referee for helpful comments on an earlier version of the paper.
The author acknowledges financial support from the Natural Sciences and Engineering Research Council of Canada (NSERC Grant No. 435844).

 \end{document}